\newtheorem{definition}{Definition}
\newtheorem{proposition}{Proposition}
\def\Dbar{\leavevmode\lower.6ex\hbox to 0pt{\hskip-.23ex \accent"16\hss}D}
\def\bZ{{\mbox{\bf Z}}}
\newcommand{\nc}{\newcommand}
\nc{\cP}{{\cal P}}
\begin{document}

{\bf\LARGE
\begin{center}
Goethals--Seidel difference families with symmetric
or skew base blocks
\end{center}
}

{\Large
\begin{center}
Dragomir {\v{Z}}. {\Dbar}okovi{\'c}\footnote{University of Waterloo,
Department of Pure Mathematics and Institute for
Quantum Computing, Waterloo, Ontario, N2L 3G1, Canada
e-mail: \url{djokovic@uwaterloo.ca}}, Ilias S.
Kotsireas\footnote{Wilfrid Laurier University, Department of Physics
\& Computer Science, Waterloo, Ontario, N2L 3C5, Canada, e-mail:
\url{ikotsire@wlu.ca}}
\end{center}
}

\begin{abstract}
We single out a class of difference families which is widely
used in some constructions of Hadamard matrices and
which we call Goethals--Seidel (GS) difference families.
They consist of four subsets (base blocks) of a finite
abelian group of order $v$, which can be used to construct
Hadamard matrices via the well-known Goethals--Seidel array.
We consider the special class of these families in
cyclic groups, where each base block is either symmetric or skew.

We omit the well-known case where all four blocks are symmetric.
By extending previous computations by several authors,
we complete the classification of GS-difference families
of this type for odd $v<50$.
In particular, we have constructed the first examples of
so called good matrices, G-matrices and best matrices of
order 43, and good matrices and G-matrices of order 45.

We also point out some errors in one of the cited references.
\end{abstract}

{\em Keywords:}
Goethals--Seidel array; difference families; good matrices; G-matrices; best matrices

2010 Mathematics Subject Classification: 05B10, 05B20.

\section{Introduction}

The well-known Goethals--Seidel array (GS-array)
\begin{equation} \label{GS-matrix}
\left[ \begin{array}{cccc}
Z_0 & Z_1R & Z_2R & Z_3R \\
-Z_1R & Z_0 & -Z_3^T R & Z_2^T R \\
-Z_2R & Z_3^T R & Z_0 & -Z_1^T R \\
-Z_3R & -Z_2^T R & Z_1^T R & Z_0
\end{array} \right],
\end{equation}
is a powerful tool for the construction of Hadamard matrices
and in particular those of skew type. The four blocks $Z_i$
of size $v$, needed to plug into the GS-array to obtain a
Hadamard matrix of order $4v$, are usually constructed by
using a suitable difference family in a finite abelian group
$G$ of order $v$. (The matrix $R$ will be defined later in the case when this abelian group is cyclic.)

We recall that the subsets $X_1,X_2,\ldots,X_t\subseteq G$ form a {\em difference family} if for every $c\in G\setminus\{0\}$ there are exactly $\lambda$ ordered triples
$(a,b,i)\in G\times G\times\{1,2,\ldots,t\}$
such that $\{a,b\}\subseteq X_i$ and $a-b=c$.
We say that $(v;k_1,k_2,\ldots,k_t;\lambda)$ is the {\em set of
parameters} of that family, where $k_i=|X_i|$ is the cardinality
of $X_i$. In the Hadamard matrix community these difference families are better known as {\em supplementary difference sets}, see e.g. \cite[Definition 1.5, p. 281]{JS:1972}. Note that we must have

\begin{equation} \label{lambda}
\sum_{i=1}^t k_i(k_i-1)=\lambda(v-1).
\end{equation}

We remark that if in a difference family $(X_i)$ we replace one of the blocks $X_i$ by its complement $G\setminus X_i$, then we
still have a difference family; its parameter set is obtained
from the original one by replacing $k_i$ with $v-k_i$ and
$\lambda$ by $\lambda+(v-2k_i)$.

The difference families that we need have $t=4$,
i.e., they are ordered quadruples of four base blocks
$X_1,X_2,X_3,X_4$, such that their parameter sets
satisfy the condition

\begin{equation} \label{sum-k}
\sum_{i=1}^4 k_i=\lambda+v.
\end{equation}

We say that the parameter sets, with $t=4$, satisfying the
conditions (\ref{lambda}) and (\ref{sum-k}) are
{\em GS-parameter sets} and that the
corresponding difference families are
{\em GS-difference families}.
While the condition (\ref{lambda}) is necessary for the
existence of the difference family with the above mentioned
parameter set, the two additional conditions that $t=4$ and
(\ref{sum-k}) are required for the construction of Hadamard
matrices by using the GS-array. Indeed, assuming that $t=4$,
the condition (\ref{sum-k}) is equivalent
to the condition ({sum-A}) below, see
\cite[Lemma 1.20]{JS:1972}.

In this paper the group $G$ will be cyclic,
$G=\bZ_v=\{0,1,\ldots,v-1\}$, $v>1$,
and the difference families will be of GS-type.
Since we can permute the $X_i$ and replace any $X_i$ with
its complement, we shall assume that

\begin{equation} \label{nejednakosti}
v/2\ge k_1\ge k_2\ge k_3\ge k_4\ge0.
\end{equation}

The blocks $Z_i$ will be circulant matrices while the matrix
$R$ in (\ref{GS-matrix}) will be the back-circulant
identity matrix of order $v$,
$$
\label{Matrix-R}
R=\left[ \begin{array}{ccccc}
0 & 0 & \cdots & 0 & 1 \\
0 & 0 &        & 1 & 0 \\
\vdots &       &   &   \\
0 & 1 &        & 0 & 0 \\
1 & 0 &        & 0 & 0
\end{array} \right].
$$

We say that a subset $X\subseteq\bZ_v$ is {\em symmetric}
if $-X=X$, and that it is {\em skew} if $v$ is odd,
$|X|=(v-1)/2$ and $(-X)\cap X=\emptyset$.

We consider only the GS-difference families $(X_i)$,
$i=1,2,3,4$, in $\bZ_v$ such that each $X_i$ is either
symmetric or skew. We have four {\em symmetry types} for such
families: (ssss), (ksss), (kkss), (kkks). (The type (kkkk) will be dismissed below.) The letter ``s'' stands for ``symmetric'' and ``k'' for ``skew''. For instance, a difference family is of (kkss) type if two of the base blocks are symmetric and the other two are skew.

To any subset $X\subseteq\bZ_v$ we associate the binary sequence
(i.e., a sequence with entries $+1$ and $-1$) of length $v$, say
$(x_0,x_1,\ldots,x_{v-1})$, where $x_i=-1$ if and only
if $i\in X$. By abuse of language, we shall use the
symbol $X$ to denote also the binary sequence
associated to the subset $X$. Further, let $A_i$ be the circulant matrix having the sequence $X_i$ as its first row.

For any GS-difference family $(X_i)$ the four circulants
$A_i$ satisfy the equation
\begin{equation} \label{sum-A}
\sum_{i=1}^4 A_i^T A_i=4vI_v,
\end{equation}
where $I_v$ is the identity matrix of order $v$.
This equation is used to verify that, after setting
$Z_i=A_{i+1}$, $i=0,\ldots,3$, into the GS-array,
we obtain a Hadamard matrix. Thus the equality
(\ref{sum-A}) is an essential feature of the GS-families.
For the sake of convenience, let us say that four circulant
$\{\pm1\}$-matrices $(A_i)$ of order $v$ are {\em GS-matrices}
if they satisfy the equation (\ref{sum-A}).

Let $(A_i)$ be GS-matrices. If they have the symmetry type
(ssss) then they are the well-known Williamson matrices (with four symmetric circulants).
If they have symmetry type (ksss), with $A_1$ of skew type,
i.e. $A_1+A_1^T=2I_v$, then the quadruple consisting of the circulant $A_1$ and the back-circulants $B_i=A_iR$, $i=2,3,4$, are good matrices. Alternatively, for $i=2,3,4$ one can
take $B_i$ to be the back-circulant matrix having the same first row as $A_i$.
(Note that each back-circulant is a symmetric matrix.)

As the reader may not be familiar with the good matrices, let
us define them precisely. We say that two matrices $M,N$ of order $v$ are {\em amicable} if $MN^T=NM^T$. Let $(M_i)$ be a quadruple
of $\{\pm1\}$-matrices of order $v$. We say that the $M_i$ are
{\em Willamson type matrices} if they are pairwise amicable and satisfy the equation (\ref{sum-A}). Finally, we say that the matrices $M_i$ are {\em good matrices} if they are Williamson type matrices, one of which is of skew type and the other three symmetric.

The G-matrices over a finite abelian group have been introduced
long ago by J. Seberry Walis \cite[p. 154]{JS75}. (The condition
(iii) of that definition is redundant as any two matrices
of type 1 commute.) In the case that we consider, the group $G$ is cyclic and G-matrices are simply the GS-matrices having the  symmetry type (kkss).

Similarly, the GS-matrices of symmetry type (kkks) have been
given the name of {\em best matrices} (see \cite{GKS:LAMA:2001}).

By eliminating the parameter $\lambda$ from the
equations (\ref{lambda}) and (\ref{sum-k}),
we obtain that
\begin{equation} \label{sum-kv}
\sum_{i=1}^4 (v-2k_i)^2=4v.
\end{equation}
It is also true that equations (\ref{lambda}) and (\ref{sum-kv})
imply the equation (\ref{sum-k}).

Hence, if all $X_i$ are skew then necessarily $v=1$.
For that reason we have omitted the (kkkk) type.
Note also that if $k_4=0$ then (\ref{sum-kv}) implies that
$v\le4$. Since the Williamson matrices are treated extensively
in \cite{HKT}, we omit the (ssss) type as well.

In order to find GS-matrices of a given order $v$, one has to
compute first the possible GS-parameter sets. This is easily
accomplished by solving the Diophantine equation (\ref{sum-kv}) subject to the conditions (\ref{nejednakosti}).

Our main objective is to extend and complete the classification
(up to equivalence) of the GS-difference families of types
(ksss), (kkss) and (kkks) for odd $v<50$. See Table 1 in
section \ref{summary} for the summary of old and new results
in this range.

\section{GS-parameter sets}

The following proposition follows immediately from the
equation (\ref{sum-kv}).

\begin{proposition} \label{param-1}
The GS-parameter sets $(v;k_1,k_2,k_3,k_4;\lambda)$ with
$v/2\ge k_1\ge k_2\ge k_3\ge k_4\ge 0$ are parametrized
by four-odd-square decompositions
$4v=\sum_{i=1}^4 s_i^2$ when $v$ is odd, and
by four-square decompositions
$v=\sum_{i=1}^4 s_i^2$ when $v$ is even.
In both cases we require that $0\le s_1\le s_2\le s_3\le s_4$.
\end{proposition}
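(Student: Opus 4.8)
The plan is to set up an explicit bijection between GS-parameter sets and the square decompositions in the statement, the bijection being nothing more than the substitution $s_i=v-2k_i$. I would first remove $\lambda$ from the bookkeeping. For $v>1$, a quadruple $(v;k_1,k_2,k_3,k_4;\lambda)$ obeying (\ref{nejednakosti}) is a GS-parameter set precisely when the $k_i$ satisfy (\ref{sum-kv}) and $\lambda=\sum_i k_i-v$: a short expansion shows that (\ref{sum-kv}) is equivalent to the identity $\sum_i k_i(k_i-1)=(v-1)\bigl(\sum_i k_i-v\bigr)$, hence (since $v>1$) to the conjunction of (\ref{lambda}) and (\ref{sum-k}), as already noted in the text. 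Thus it suffices to parametrize the integer quadruples $(k_1,k_2,k_3,k_4)$ with $v/2\ge k_1\ge k_2\ge k_3\ge k_4\ge 0$ that satisfy (\ref{sum-kv}).

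Now put $s_i:=v-2k_i$. Then $v/2\ge k_1\ge k_2\ge k_3\ge k_4\ge 0$ is equivalent to $0\le s_1\le s_2\le s_3\le s_4\le v$, and (\ref{sum-kv}) reads $\sum_{i=1}^4 s_i^2=4v$. Conversely, given integers $0\le s_1\le s_2\le s_3\le s_4$ with $\sum_i s_i^2=4v$, the numbers $k_i:=(v-s_i)/2$ are integers exactly when each $s_i$ has the same parity as $v$, and then the inequalities on the $s_i$ translate back to (\ref{nejednakosti}); the only point worth a remark is $s_i\le v$ (equivalently $k_i\ge 0$), which follows from $s_i^2\le\sum_j s_j^2=4v$ once $4v\le v^2$, and is verified directly from the finitely many decompositions when $v\in\{1,2,3\}$. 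Hence, for each fixed $v$, GS-parameter sets correspond bijectively to the decompositions $4v=\sum_{i=1}^4 s_i^2$ with $0\le s_1\le s_2\le s_3\le s_4$ and all $s_i\equiv v\pmod 2$.

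It remains to phrase this in the two cases. If $v$ is odd, the parity condition forces all $s_i$ odd, so we obtain exactly the four-odd-square decompositions of $4v$. If $v$ is even, all $s_i$ are even, and writing $s_i=2t_i$ turns $\sum_i s_i^2=4v$ into $\sum_i t_i^2=v$ with $0\le t_1\le t_2\le t_3\le t_4$; relabelling the $t_i$ as $s_i$ gives the four-square decompositions of $v$. (One may add, through a congruence modulo $8$, that every four-square decomposition of $4v$ already has all parts congruent to $v$ modulo $2$, so the parity restriction is vacuous among such decompositions; this is not needed here.) I do not expect any genuine obstacle — the proposition is, as the authors say, immediate from (\ref{sum-kv}) — but the places to be careful are keeping $\lambda$ out of the count, correctly matching the reversed orderings of the $k_i$ and the $s_i$, and the nonnegativity $k_i\ge 0$ in the small cases.
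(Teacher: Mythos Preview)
Your argument is correct and is exactly the substitution $s_i=v-2k_i$ that the paper has in mind when it says the proposition ``follows immediately from (\ref{sum-kv})''; the paper gives no further proof, so there is nothing to compare beyond that.

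One small correction: your parenthetical remark that ``every four-square decomposition of $4v$ already has all parts congruent to $v$ modulo $2$'' is false when $v$ is odd --- for instance $4\cdot 3=12=0^2+2^2+2^2+2^2$ --- so the adjective ``odd'' in ``four-odd-square decompositions'' is a genuine restriction, not a vacuous one. Since you already flagged this remark as unnecessary, it does not affect your proof; just drop the parenthetical. (For even $v$ your claim is true: a mod-$8$ check shows all $s_i$ must be even.)
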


The parameter sets needed for the construction of
GS-difference families of symmetry types (ksss), (kkss) and
(kkks) always exist. We treat the three cases separately.

\begin{proposition} \label{stav-0}
For a given odd $v\ge1$, there is at least one GS-parameter set
with $(v-1)/2=k_1\ge k_2\ge k_3\ge k_4$.
\end{proposition}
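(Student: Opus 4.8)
The plan is to use Proposition~\ref{param-1} to reduce the statement to a classical fact about sums of three squares. By that proposition, together with~(\ref{sum-kv}), a GS-parameter set for odd $v$ amounts to a decomposition $4v=s_1^2+s_2^2+s_3^2+s_4^2$ into four odd squares with $0\le s_1\le s_2\le s_3\le s_4$, the correspondence being $s_i=v-2k_i$, i.e.\ $k_i=(v-s_i)/2$. Since $s_1\le s_2\le s_3\le s_4$ automatically yields $k_1\ge k_2\ge k_3\ge k_4$, the requirement $k_1=(v-1)/2$ is simply $s_1=1$. Thus the proposition is equivalent to the assertion that $4v-1$ is a sum of three odd squares.

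First I would note that, since $v$ is odd, $4v-1\equiv 3\pmod 8$; in particular $4v-1$ is a positive integer not of the form $4^a(8b+7)$, so by Legendre's three-square theorem it can be written as $4v-1=a^2+b^2+c^2$ with $a,b,c\in\bZ$. Next, reducing modulo $8$ and recalling that every square is congruent to $0$, $1$ or $4$ modulo $8$, one sees that the only way three such residues can add up to $3$ modulo $8$ is $1+1+1$; hence $a^2\equiv b^2\equiv c^2\equiv 1\pmod 8$, so $a,b,c$ are all odd. Replacing them by their absolute values we may assume $a,b,c\ge 1$.

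Finally I would set $s_1=1$ and take $(s_2,s_3,s_4)$ to be $(a,b,c)$ rearranged into nondecreasing order; then $1=s_1\le s_2\le s_3\le s_4$, all four are odd, and $s_1^2+s_2^2+s_3^2+s_4^2=1+(4v-1)=4v$. The only point requiring a line of verification is that this really is a GS-parameter set, i.e.\ that $k_4=(v-s_4)/2\ge 0$: from $s_2,s_3\ge 1$ we get $s_4^2\le 4v-3$, and $4v-3\le v^2$ because $(v-1)(v-3)\ge 0$ for every odd $v\ge 1$, so $s_4\le v$. Proposition~\ref{param-1} now turns this four-odd-square decomposition into a GS-parameter set $(v;k_1,k_2,k_3,k_4;\lambda)$ with $k_1=(v-s_1)/2=(v-1)/2\ge k_2\ge k_3\ge k_4\ge 0$, which is what we wanted. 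I do not expect a genuine obstacle here: the one nontrivial external input is the three-square theorem, and the only thing to be careful about is bookkeeping with the normalisation ($s_1=1$ versus $k_1=(v-1)/2$, and the boundary case $k_4=0$ occurring exactly for $v\in\{1,3\}$).
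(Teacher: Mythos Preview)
Your proof is correct and follows essentially the same route as the paper: set $k_1=(v-1)/2$, reduce~(\ref{sum-kv}) to $4v-1=\sum_{i\ge 2}(v-2k_i)^2$, and invoke a classical three-square result. The only differences are cosmetic: the paper cites Gauss's triangular-number theorem (equivalent to ``every positive integer $\equiv 3\pmod 8$ is a sum of three odd squares''), whereas you quote Legendre's three-square theorem and supply the short $\bmod\ 8$ argument yourself; and you make explicit the check $s_4\le v$ (i.e.\ $k_4\ge 0$), which the paper simply asserts.
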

\begin{proof}
By setting $k_1=(v-1)/2$ the equation (\ref{sum-kv}) reduces to
$$
\sum_{i=2}^4 (v-2k_i)^2 = 4v-1.
$$
Since $v$ is odd, $4v-1\equiv 3 \pmod{8}$. It is known that
every positive integer congruent to 3 modulo 8 is a sum
of three odd squares. Indeed, this claim is equivalent to the
well-known theorem of Gauss that every positive integer is a sum of at most three triangular numbers \cite{Duke:1997}.
Thus there exist integers $k_2,k_3,k_4$ satisfying the above
equation and such that $(v-1)/2\ge k_2\ge k_3\ge k_4\ge0$.
Consequently, the $k_i$, $i=1,\ldots,4$, satisfy the equation
(\ref{sum-kv}) which implies that $\lambda\ge0$.
\end{proof}

In the case (ksss) there are only finitely many known
GS-difference families, namely for odd $v\le39$
and $v=127$. The cases $v=43,45$ are new and appear here
for the first time. Our exhaustive search did not find
any GS-difference families of type (ksss) for $v=41,47,49$.

In the case (kkss) there is an infinite series of
GS-difference families constructed by E. Spence
\cite{Spence:1977}.

For type (kkss), the GS-parameter sets are
given by the following proposition.

\begin{proposition} \label{stav-1}
The GS-parameter sets with $(v-1)/2=k_1=k_2\ge k_3\ge k_4$ exist if and only if $2v-1$ is a sum of two squares. They have
the form
$$
(v; k_1=k_2=(v-1)/2,(v-r+s)/2,(v-r-s)/2;v-r-1),
$$
where $r,s$ are integers, $r>s\ge 0$ and $r^2+s^2=2v-1$.
\end{proposition}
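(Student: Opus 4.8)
The plan is to reduce the assertion to the elementary fact that $2v-1$ is a sum of two squares, by substituting the prescribed values $k_1=k_2=(v-1)/2$ into the defining equation (\ref{sum-kv}) and then making a parity-preserving change of variables. First note that the requirement $k_1=k_2=(v-1)/2$ forces $v$ to be odd, and makes $v-2k_1=v-2k_2=1$, so (\ref{sum-kv}) collapses to
\[
(v-2k_3)^2+(v-2k_4)^2 = 4v-2 = 2(2v-1).
\]
Setting $a=v-2k_3$ and $b=v-2k_4$, these are \emph{odd} integers (as $v$ is odd), and the conditions $0\le k_4\le k_3\le(v-1)/2$ amount to $1\le a\le b$, the remaining bound $b\le v$ (that is, $k_4\ge 0$) being forced anyway since $b^2\le 4v-2<(v+1)^2$. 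Thus GS-parameter sets of the stated shape correspond bijectively to solutions of $a^2+b^2=2(2v-1)$ in odd integers with $1\le a\le b$, recovering $k_3=(v-a)/2$, $k_4=(v-b)/2$.

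The key step is then the substitution $r=(a+b)/2$, $s=(b-a)/2$, which is integral (since $a,b$ have equal parity) and invertible, with $a=r-s$, $b=r+s$. A one-line computation gives $r^2+s^2=\tfrac12(a^2+b^2)=2v-1$, while $a\ge 1$ together with $a\le b$ yields $r>s\ge 0$. Conversely, given $r>s\ge 0$ with $r^2+s^2=2v-1$: because $2v-1$ is odd, $r$ and $s$ have opposite parity, so $a=r-s$ and $b=r+s$ are odd, satisfy $a^2+b^2=2(2v-1)$ and $1\le a\le b$, and obey $b=r+s\le v$ thanks to $(r+s)^2\le 2(r^2+s^2)=4v-2<(v+1)^2$. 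Substituting back, $k_3=(v-r+s)/2$ and $k_4=(v-r-s)/2$ come out as nonnegative integers with $(v-1)/2\ge k_3\ge k_4$, so (\ref{nejednakosti}) holds; and since $2v-1$ is odd we have $r\ne s$ in any representation $2v-1=r^2+s^2$, so after reordering we may always take $r>s\ge 0$. This shows that a parameter set of the stated form exists precisely when $2v-1$ is a sum of two squares, and its $\lambda$ is obtained from (\ref{sum-k}): $\lambda=\sum_i k_i-v=(v-1)+(v-r)-v=v-r-1$.

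I do not anticipate a genuine obstacle; the whole proof is essentially the change of variables above. The only care needed is the bookkeeping of parities, so that the condition ``$a,b$ odd'' matches ``$2v-1$ odd'' and puts no extra restriction on $r,s$, and the verification that the inequalities (\ref{nejednakosti}) hold automatically, for which the single non-tautological ingredient is the estimate $(r+s)^2\le 2(r^2+s^2)$ forcing $k_4\ge 0$.
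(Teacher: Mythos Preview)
Your proof is correct and follows essentially the same route as the paper: substitute $k_1=k_2=(v-1)/2$ into the defining relation to obtain $(v-2k_3)^2+(v-2k_4)^2=2(2v-1)$, and then pass to representations $2v-1=r^2+s^2$ via the change of variables $r=(a+b)/2$, $s=(b-a)/2$. Your converse is in fact slightly cleaner than the paper's: it verifies $k_4\ge 0$ uniformly from $(r+s)^2\le 2(r^2+s^2)=4v-2<(v+1)^2$, whereas the paper bounds $r\le (v-1)/2$ and must treat the cases $v=3,5$ separately.
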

\begin{proof}
Assume first that $k_1=k_2=(v-1)/2$. The equation (\ref{sum-k})
gives $\lambda=k_3+k_4-1$ and from the equation (\ref{lambda})
we obtain that
$$
k_3(k_3-1)+k_4(k_4-1)=(v-1) \left( k_3+k_4-\frac{v-1}{2} \right).
$$
This equation can be written as $(v-2k_3)^2+(v-2k_4)^2=2(2v-1)$.
This implies that $2v-1$ is a sum of two squares.

Conversely, assume that $2v-1=r^2+s^2$ where $r\ge s\ge0$ are
integers. In fact we must have $r>s$. If $v=3$ we have $r=1$ and $s=0$. If $v=5$ we have $r=3$ and $s=0$. So, the assertion holds
in these two cases. We may assume that $v>5$. We claim that
$r\le(v-1)/2$. Otherwise we would have $2r\ge v+1$ and $4(2v-1)=4r^2+4s^2\ge(v+1)^2$ which gives the contradiction
$(v-1)(v-5)\le0$. As $r$ and $s$ have different parity, we can set $k_3=(v-r+s)/2$, $k_4=(v-r-s)/2$ and $\lambda=v-r-1\ge0$.
It is easy to verify that we obtain a valid GS-parameter set.
\end{proof}

For type (kkks), the GS-parameter sets are
given by the following proposition.

\begin{proposition} \label{stav-2}
The GS-parameter sets with $(v-1)/2=k_1=k_2=k_3\ge k_4$ have
the form
$$
(v=r^2+r+1; k_1=k_2=k_3=r(r+1)/2,~ k_4=r(r-1)/2;
 \lambda=r^2-1),
$$
where $r$ is a positive integer.
\end{proposition}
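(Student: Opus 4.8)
The plan is to feed the hypothesis $k_1=k_2=k_3=(v-1)/2$ straight into the single quadratic constraint (\ref{sum-kv}). Since $v-2k_i=1$ for $i=1,2,3$, that equation collapses to $(v-2k_4)^2=4v-3$, so the entire content of the proposition is the statement that $4v-3$ is forced to be a perfect square.

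The next step is to parametrize the solutions. As $v$ is odd, $v-2k_4$ is an odd integer, and the inequality $k_4\le k_3=(v-1)/2$ makes it at least $1$; so I would write $v-2k_4=2r+1$ with $r\ge0$ an integer. Solving $4v-3=(2r+1)^2$ gives $v=r^2+r+1$, and substituting back produces $k_4=r(r-1)/2$ together with $k_1=k_2=k_3=(v-1)/2=r(r+1)/2$. Then (\ref{sum-k}) (equivalently, eliminating $\lambda$ via (\ref{lambda})) yields $\lambda=k_1+k_2+k_3+k_4-v=2r^2+r-(r^2+r+1)=r^2-1$, which is exactly the asserted shape.

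Finally I would discard $r=0$, since it gives $v=1$ and $\lambda=-1<0$, not a legitimate parameter set; hence $r$ must be a positive integer. For completeness one also checks the converse, that for every positive integer $r$ the displayed quadruple satisfies (\ref{lambda}) and the inequalities $v/2\ge k_1\ge k_2\ge k_3\ge k_4\ge0$, so it is a genuine GS-parameter set; this is just a direct substitution. There is no real obstacle here — the argument is essentially a one-line computation — the only thing worth flagging is the contrast with Proposition \ref{stav-1}: in the (kkks) case the parameter set is essentially unique and occurs only for the sparse family of orders $v=r^2+r+1$, rather than being governed by a two-square condition on $2v-1$.
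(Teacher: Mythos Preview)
Your proof is correct and a bit more direct than the paper's. You substitute into the derived identity (\ref{sum-kv}) and immediately obtain $(v-2k_4)^2=4v-3$; the positivity $v-2k_4\ge1$ coming from $k_4\le k_3$ picks out the unique root and yields the parametrization by $r$ with no extra work. The paper instead works from (\ref{sum-k}) and (\ref{lambda}): writing $v=2s+1$ and eliminating $k_4$ it arrives at a quadratic $\lambda^2-(4s-1)\lambda+4s(s-1)=0$ whose discriminant $8s+1$ must be a square $(2r+1)^2$, and then discards the second root $\lambda=r^2+2r$ because it would force $k_4>s$. Since $8s+1=4v-3$, the two arguments are driven by the same arithmetic condition; your route just reaches it in one step and avoids the spurious root altogether.
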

\begin{proof}
Set $v=2s+1$, and so $k_1=k_2=k_3=s$.
From (\ref{sum-k}) we obtain that $k_4=\lambda-s+1$.
Since $\sum k_i(k_i-1)=\lambda(v-1)=2s\lambda$, we deduce
that $\lambda$ satisfies the quadratic equation
$\lambda^2-(4s-1)\lambda+4s(s-1)=0$. The discriminant
$8s+1$ must be an odd square, say $(2r+1)^2$. Thus,
$s=r(r+1)/2$. The two roots of the quadratic equation
are $r^2-1$ and $2r+r^2$. We omit the second root because
it gives that $k_4>s=(v-1)/2$. This completes the proof.
\end{proof}

We observe that the formula $v=r^2+r+1$ implies that
$4v-3$ is an odd square (see also \cite{GKS:LAMA:2001}).

It is not known whether for each of these parameter sets
there exists a GS-difference family of type (kkks).
They exist for the first six values of $r$,
$r=1,2,\ldots,6$.

\section{Elementary transformations and equivalence relation}

Let $(X_i)$ be a difference family over a finite abelian
group $G$ of odd order $v$ (written additively). We fix a
GS-parameter set such that (\ref{nejednakosti}) holds.
The set of GS-families with this parameter set is invariant under the following elementary transformations:

(a) For some $i$ replace $X_i$ by a translate $g+X_i$,
$g\in G$.

(b) For some $i$ replace $X_i$ by $-X_i$.

(c) For all $i$ replace $X_i$ by its image $\alpha(X_i)$
under an automorphism $\alpha$ of $G$.

(d) Exchange $X_i$ and $X_j$ provided that $|X_i|=|X_j|$.

In the case $G=\bZ_v$, $\alpha$ is just the multiplication
modulo $v$ by some integer $k$ relatively prime to $v$.

\begin{definition} \label{ekv}
We say that two difference families with the same parameter set satisfying (\ref{nejednakosti}) are {\em equivalent} if one can be transformed to the other by a finite sequence of elementary
transformations.
\end{definition}

Note that the symmetry properties of the base blocks
$X_i$ may be destroyed by operations (a), and changed
by operations (d). Nevertheless we shall use the above
definition of equivalence for the classification of
GS-difference families with symmetry.

The definition of equivalence for the special case of
cyclic GS-difference families of type (ksss), (kkss), and (kkks)
given in \cite{GK:JCMCC:2002,GKS:LAMA:2001,GKS:CompStat:2002}
uses only elementary transformations of type (c).
Apparently their GS-difference families (also known as
supplementary difference sets) are non-ordered quadruples,
in which case they do not need the elementary operations
of type (d). For convenience, we shall refer to their
equivalence classes as {\em small classes}.
If $X_i$ is symmetric then $-X_i=X_i$ but this fails in
the case when $X_i$ is skew. Hence, for GS-difference families
of type (ksss) the elementary operation (b) can be viewed as a  special case of (c). However, this is not the case for
GS-difference families of type (kkss) or (kkks).
For that reason, our equivalence class may contain more than
one small class.
In the (kkss) case an equivalence class may contain
two small classes, and in the (kkks) case it may contain
up to four small classes.

Table 1 of \cite{GK:JCMCC:2002} contains the list of
representatives of the small equivalence classes of
supplementary difference sets (i.e., GS-difference families)
of type (kkss) for all odd $v\le 33$. It should be used
with care as it contains several errors.
For example, let us consider the very first
item in that table, namely the parameter set (3;1,1,2,0;1).
It is claimed there that there is only one small equivalence
class, namely the one with the base blocks
$S_0=\{1\}$, $S_1=\{1\}$, $S_2=\{1,2\}$, $S_3=\emptyset$.
However, if we replace $S_0$ with $-S_0=\{2\}$ then,
according to the definition given in
\cite[p. 207]{GK:JCMCC:2002},
the families $-S_0,S_1,S_2,S_3$ and $S_0,S_1,S_2,S_3$ are
not equivalent. Note that they are equivalent according to
our Definition \ref{ekv}. For more examples see
our comment at the end of subsection \ref{33} of the appendix.

\section{Algorithmic issues}

All computations for GS-difference families $(X_1,X_2,X_3,X_4)$
with parameters $(v;k_1,k_2,k_3,k_4;\lambda)$, $v$ odd, in this paper have been performed via a two-phase algorithmic scheme.
The first phase is the data collection phase.
The second phase is the matching phase. We provide a more detailed
description of these two phases below.

\subsection{Data collection}

During the data collection phase, we collect four data files
$F1,F2,F3,F4$, corresponding to the four base blocks
$X_1,X_2,X_3,X_4$ of the GS-difference family.
Each data file contains positive integers arranged in $k$ columns
where $k=(v-1)/2$. Each data file is typically made up of several millions of rows, and corresponds to an exhaustive search for sequences arising from the corresponding base block.
The usual power spectral density filtering (see \cite{FGS:2001}) is applied, during the data collection phase.
The positive integers in the four data files represent the multiplicities of the various differences arising from the corresponding base blocks.
More specifically, a combinatorial generation algorithm is used to generate exhaustively all possible base blocks that are not eliminated from the power spectral density filtering.
If we denote by $X_{1,i}$ the $i$th candidate for the first base block $X_1$, the $i$th row of file $F1$ contains a list of how often each group
element is represented as a difference $g_1 - g_2$ where
$g_1, g_2 \in X_{1,i}$. The same principle applies to files $F2,F3,F4$ as well.
Here is a schematic representation of the four data files:
$$
\overbrace{
    \begin{array}{|ccc|}
    \hline
    F1_{1,1}   & \ldots  & F1_{1,k} \\
    \vdots     & \vdots  & \vdots \\
    F1_{l_1,1} & \ldots  & F1_{l_1,k} \\
    \hline
    \end{array}
    }^{\mbox{file } F1}
        \quad
\overbrace{
    \begin{array}{|ccc|}
    \hline
    F2_{1,1}   & \ldots  & F2_{1,k} \\
    \vdots     & \vdots  & \vdots \\
    F2_{l_2,1} & \ldots  & F2_{l_2,k} \\
    \hline
    \end{array}
    }^{\mbox{file } F2}
        \quad
\overbrace{
    \begin{array}{|ccc|}
    \hline
    F3_{1,1}   & \ldots & F3_{1,k} \\
    \vdots     & \vdots & \vdots \\
    F3_{l_3,1} & \ldots & F3_{l_3,k} \\
    \hline
    \end{array}
    }^{\mbox{file } F3}
        \quad
\overbrace{
    \begin{array}{|ccc|}
    \hline
    F4_{1,1}   & \ldots  & F4_{1,k} \\
    \vdots     & \vdots  & \vdots \\
    F4_{l_4,1} & \ldots  & F4_{l_4,k} \\
    \hline
    \end{array}
    }^{\mbox{file } F4}
$$
where $l_1, l_2, l_3, l_4$ are the total numbers of lines of the files $F1, F2, F3, F4$.
Due to the fact that the differences $d$ and $v-d$ have
the same multiplicity, we record these multiplicities only
for $d=1,2,\ldots,k$ where $k=(v-1)/2$.

\subsection{Matching}

During the matching phase, we process the four data files $F1, F2, F3, F4$,
with the aim to find quadruples of lines (one in each file), so that
the $k$ element-wise sums are all equal to the constant $\lambda$, defined in the beginning of the Introduction.
More specifically, we are looking for a line $\ell_{1}$ in file $F1$, a line $\ell_{2}$ in file $F2$, a line $\ell_{3}$ in file $F3$, a line $\ell_{4}$ in file $F4$,
such that the following $k$ conditions are satisfied:
$$
    \left\{
    \begin{array}{c}
        F1_{\ell_{1},1} + F2_{\ell_{2},1} + F3_{\ell_{3},1} + F4_{\ell_{4},1} = \lambda \\
        \vdots \\
        F1_{\ell_{1},k} + F2_{\ell_{2},k} + F3_{\ell_{3},k} + F4_{\ell_{4},k} = \lambda \\
    \end{array}
    \right.
$$
It may be the case that such quadruples do not exist,
which proves the non-existence of the corresponding GS-difference families.
If such quadruples do exist, then we compute all of them
by using a bins-based technique and a parallel backtracking algorithm \cite{P:1984}.
We use two different algorithms and two different implementations, to validate the results.

The bins-based technique works by performing a pre-processing step which consists in calculating
the different values that appear on the first column of each of the four data files, and subsequently splitting
the four data files in smaller parts, so that each part contains only those rows that start with
a specific number. Since we know that numbers in the first column must sum to $\lambda$, we are able to
compute all possible cases for which this can happen, given all the possible values appearing in the first columns of $F1,F2,F3,F4$.
This process typically establishes a few hundreds of possible cases $C_i$, each of which can then be processed in parallel.
A typical case $C_i$ is identified by four numbers $n_1,n_2,n_3,n_4$ such that $n_1 + n_2 + n_3 + n_4 = \lambda$ and four files
$F1_{C_i},F2_{C_i},F3_{C_i},F4_{C_i}$, such that $F1_{C_i}$ contains all lines in $F1$ that start with $n_1$ and similarly for the other three files.
It is also possible that the number of cases is zero, and in that case it is clear the problem does not have a solution.
If the number of cases is not zero, then for each case $C_i$ we examine the total numbers of lines in the corresponding four files,
$F1_{C_i},F2_{C_i},F3_{C_i},F4_{C_i}$ and if the following condition is satisfied
$$
    ( \# \mbox{lines in} \, F1_{C_i} ) \times ( \# \mbox{lines in} \, F4_{C_i} ) < 10^7
    \,\, \& \,\,
    ( \# \mbox{lines in} \, F2_{C_i} ) \times ( \# \mbox{lines in} \, F3_{C_i} ) < 10^7
$$
then case $C_i$ is dealt with by a serial C program.
Note that this condition is checked, once we have rearranged the files $F1_{C_i},F2_{C_i},F3_{C_i},F4_{C_i}$ in increasing order,
according to their total numbers of lines.
If the above condition is not satisfied for case $C_i$, then the four files $F1_{C_i},F2_{C_i},F3_{C_i},F4_{C_i}$
are split in smaller parts, by using the bins based on the second column. The threshold value $10^7$ has been determined experimentally.

\section{Summary of computational results and conclusions}
\label{summary}

There are several known infinite classes of difference
families with four base blocks which can be used
to construct Hadamard matrices via the Goethals--Seidel
array. For that reason we refer to these families as
GS-difference families. As an example let us mention one
such family constructed in \cite{XXSW:2005} which is especially
interesting as it gives skew Hadamard matrices. It was observed
recently (see \cite{Djokovic:SpecMatC:2015}) that by using
a different array (different from the GS-array) this infinite series gives also symmetric Hadamard matrices of the same orders.
We are not aware of any GS-parameter set for which there is
no cyclic difference family having the parameters specified by
this set.

The cyclic GS-difference families with parameter set
$(v;k_1,k_2,k_3,k_4;\lambda)$ of symmetry type
(ksss), (kkss) and (kkks) provide quadruples of matrices
of order $v$ known as good matrices, G-matrices and
best matrices, respectively. Some sporadic examples
of such matrices are given, and systematic exhaustive searches
for some small values of $v$ have been carried out in
\cite{Djokovic:JCMCC:1993,Djokovic:OperMatr:2009,
GK:JCMCC:2002,GKS:LAMA:2001,GKS:CompStat:2002,Spence:1977,
Szekeres:1988}.

There are only finitely many good matrices that have been
constructed so far. Their orders are odd integers up to 39
and the integers 43, 45 and 127. The ones of orders
43 and 45 appear first in this paper. The order 41 is
the smallest odd integer for which there are no GS-families
of (ksss) symmetry type.
Our exhaustive searches show that such difference families do not exist also in cyclic groups of orders 47 and 49.

There is an infinite series of circulant G-matrices due to
E. Spence \cite{Spence:1977}. This fact is not well-known.
For instance it is not mentioned in the paper
\cite{GK:JCMCC:2002} devoted to G-matrices. In that paper
the authors have classified the GS-difference families
of symmetry type (kkss) for odd $v\le 33$ and given a
few examples for $v=37,41$. In the appendix (see subsection
\ref{33}) we point out that this classification has some errors.
We have performed exhaustive searches for $v=37,41,43,45$ and 49.
We found that there are 7,3,4,1 equivalence classes
for the first four of these orders, and none for $v=49$.
There are no GS-parameter sets for G-matrices when
$v=35,39,47$.

The GS-parameter sets for best matrices (symmetry type
(kkks)) form an infinite series, see Proposition
\ref{stav-2}. Apparently there is no known infinite
series of best matrices so far. There is a simple necessary
arithmetic condition for the existence of best matrices
of order $v$, namely $4v-3$ must be an odd square.
For $v=3,7,13,21,31$ the small equivalence classes are
listed in \cite{GKS:LAMA:2001}. However, we warn the reader
that these lists may contain errors just like in the
case of G-matrices. For instance when $v=3$ there are
two small equivalence classes while only one is
recorded in \cite{GKS:LAMA:2001}. Our exhaustive
search for $v=43$ found that there are five equivalence
classes of best matrices.

A short summary of the known facts about the existence
of good matrices, G-matrices and best matrices for
odd orders $v<50$ is given in Table 1 below.
We write ``yes'' if the matrices exist and ``no''
if the exhaustive search did not find any such matrices.
The entry ``$\times$'' means that the parameter set is not
suitable for the symmetry type specified for the column.
The entries that we obtained in this work are shown in
boldface in Table 1.

$$ \begin{array}{ccccccccc c ccccccccc} \hline
v  &  k_1  &  k_2  &  k_3  & k_4 & \lambda  &
\mbox{ksss} & \mbox{kkss} & \mbox{kkks} & \quad &
v  &  k_1  &  k_2  &  k_3  & k_4 & \lambda  &
\mbox{ksss} & \mbox{kkss} & \mbox{kkks} \\ \hline
3 &  1 &  1 &  1  & 0 &  0 &
\mbox{yes} & \mbox{yes} & \mbox{yes} & &
31 &  15 &  15 &  15 &  10 & 24 &
\mbox{yes} & \mbox{yes} & \mbox{yes} \\
5 &  2 &  2 &  1  & 1 &  1 &
\mbox{yes} & \mbox{yes} & \times & &
  &  15 &  13 &  12 &  12 & 21 &
\mbox{yes} & \times  & \times \\
7 &  3 &  3 &  3  & 1 &  3 &
\mbox{yes} & \mbox{yes} & \mbox{yes} & &
33 &  16 &  16 &  15 &  11 & 25 &
\mbox{yes} & \mbox{yes} & \times \\
  &  3 &  2 & 2  & 2 & 2 &
\mbox{yes} & \times & \times & &
  &  16 &  16 &  13 &  12 & 24 &
\mbox{yes} & \mbox{yes} & \times \\
9 & 4 & 4 & 3 & 2 & 4 &
\mbox{yes} & \mbox{yes} & \times & &
  &  16 &  14 &  14 &  12 & 23 &
\mbox{yes} & \times & \times \\
11 & 5 & 4 & 4 & 3 & 5 &
\mbox{yes} & \times & \times & &
35  &  17 &  16 &  16 &  12 & 26 &
\mbox{yes} & \times & \times \\
13 & 6 & 6 & 6 & 3 & 8 &
\mbox{yes} & \mbox{no} & \mbox{yes} & &
   &  17 &  16 &  14 & 13 & 25 &
\mbox{yes} & \times & \times \\
   & 6 & 6 & 4 & 4 & 7 &
\mbox{yes} & \mbox{yes} & \times & &
37  &  18 &  18 &  16 &  13 & 28 &
\mbox{yes} & \mbox{yes} & \times \\
15 & 7 & 7 & 6 & 4 & 9 &
\mbox{yes} & \mbox{yes}  & \times & &
   &  18 & 15 &  15 & 15 & 26 &
\mbox{yes} & \times & \times \\
  & 7 & 6 & 5 & 5 & 8 &
\mbox{yes} & \times  & \times & &
39 & 19 & 18 &  17 & 14 & 29 &
\mbox{yes} & \times & \times \\
17 & 8 & 7 & 7 & 5 & 10 &
\mbox{yes} & \times & \times & &
   &  19 & 17 &  16 & 15 & 28 &
\mbox{yes} & \times & \times \\
19 & 9 & 9 & 7 & 6 & 12 &
\mbox{yes} & \mbox{yes}  & \times & &
41 & 20 & 20 & 16 & 16 & 31 &
\mbox{\bf no} & \mbox{yes} & \times \\
 & 9 & 7 & 7 & 7 & 11 &
\mbox{yes} & \times & \times & &
43 & 21 & 21 & 21 & 15 & 35 &
\mbox{\bf no} & \mbox{\bf yes} & \mbox{\bf yes} \\
21 & 10 & 10 & 10 & 6 & 15 &
\mbox{yes} & \mbox{yes} & \mbox{yes} & &
   & 21 & 21 & 18 & 16 & 33 &
\mbox{\bf no} & \mbox{\bf yes} & \times \\
   & 10 & 9 & 8 & 7 & 13 &
\mbox{yes} & \times & \times & &
   & 21 & 19 & 19 & 16 & 32 &
\mbox{\bf yes} & \times & \times \\
23 & 11 & 11 & 10 & 7 & 16 &
\mbox{yes} & \mbox{yes} & \times & &
   & 21 & 20 & 17 & 17 & 32 &
\mbox{\bf no} & \times & \times \\
25 & 12 & 12 & 9 & 9 & 17 &
\mbox{no} & \mbox{yes} & \times & &
45 & 22 & 22 & 21 & 16 & 36 &
\mbox{\bf yes} & \mbox{\bf yes} & \times \\
   & 12 & 11 & 11 & 8 & 17 &
\mbox{yes} & \times & \times & &
   & 22 & 21 & 19 & 17 & 34 &
\mbox{\bf yes} & \times & \times \\
   & 12 & 10 & 10 & 9 & 16 &
\mbox{yes} & \times & \times & &
   & 22 & 19 & 19 & 18 & 33 &
\mbox{\bf yes} & \times & \times \\
27 & 13 & 13 & 11 & 9 & 19 &
\mbox{yes} & \mbox{yes} & \times & &
47 & 23 & 22 & 22 & 17 & 37 &
\mbox{\bf no} & \times & \times \\
   & 13 & 12 & 10 & 10 & 18 &
\mbox{yes} & \times & \times & &
   & 23 & 21 & 19 & 19 & 35 &
\mbox{\bf no} & \times & \times \\
29 & 14 & 13 & 12 & 10 & 20 &
\mbox{yes} & \times & \times & &
49 & 24 & 24 & 22 & 18 & 39 &
\mbox{\bf no} & \mbox{\bf no} & \times \\
 & & & & & &
 & & & &
   & 24 & 22 & 21 & 19 & 37 &
\mbox{\bf no} & \times & \times \\
\end{array} $$
\begin{center} Table 1. Existence of GS-difference families with symmetry \end{center}

More details, including the listing of representatives
of equivalence classes of GS-difference families for $33\le v\le 49$, are provided in the appendix.

For applications of the good, best and G-matrices to
construction of the various combinatorial structures
see \cite{Djokovic:JCMCC:1993,GK:JCMCC:2002,
GKS:LAMA:2001,GKS:CompStat:2002}. Let us just mention two
of them. First, good matrices of order $v$ give the
complex Hadamard matrices of order $2v$.
Second, by Theorems 1 and 2 of \cite{GKS:LAMA:2001}, if
there exist best matrices of order $v$, then there exist
orthogonal designs
$OD(8v;1,1,2,2,4v-3,4v-3)$ and
$OD(4v;1,1,1,4v-3)$. In particular, this holds for $v=43$
as we have constructed best matrices of order 43. For the
definition of OD's see the same paper or \cite{SY:1992}.

\section{Acknowledgements}
The authors would like to thank the referees for their constructive comments, 
that contributed to improving the paper.
The authors wish to acknowledge generous support by NSERC, grant numbers 5285-2012 and 213992.
Computations were performed on the SOSCIP Consortium's Blue Gene/Q, computing platform.
SOSCIP is funded by the Federal Economic Development Agency of Southern Ontario, IBM Canada Ltd.,
Ontario Centres of Excellence, Mitacs and 14 academic member institutions.

\section{Appendix}

For the previous work on the classification of GS-families
of type (ksss) see \cite{GKS:CompStat:2002},
for the type (kkss) see \cite{GK:JCMCC:2002}, and
for (kkks) see \cite{GKS:LAMA:2001}. The paper
\cite{Djokovic:OperMatr:2009} gives additional GS-families
of all these types, as well as some multi-circulant GS-families
of type (ssss).

\noindent For each odd integer $v$ in the range $33\le v\le 49$ and
for each GS-parameter set $(v;k_1,k_2,k_3,k_4;\lambda)$
with $k_1=(v-1)/2 \ge k_2\ge k_3\ge k_4\ge0$ we list below
(or provide references for) the representatives of the
equivalence classes for GS-difference families
having symmetry type (ksss), (kkss) or (kkks).

\subsection{ $v=33$ \label{33} }

For the symmetry type (ksss), we have verified the claim made
in \cite{GKS:CompStat:2002} that there are 6, 4 and 5
equivalence classes of GS-difference families with
parameter sets $(33;16,16,15,11;25)$, $(33;16,16,13,12;24)$
and $(33;16,14,14,12;23)$, respectively.

On the other hand, our computations for type (kkss) do not
agree with those in \cite{GK:JCMCC:2002}.
We found that there are 20 equivalence classes. Their
representatives are listed below.

\begin{verbatim}
(kkss) type:

(a) (33;16,16,15,11;25)
[2,4,6,10,13,14,15,16,21,22,24,25,26,28,30,32],
[3,4,5,6,7,8,9,12,15,16,19,20,22,23,31,32],
[0,2,7,8,12,13,14,16,17,19,20,21,25,26,31],
[0,2,5,8,10,11,22,23,25,28,31].

(b) (33;16,16,15,11;25)
[1,3,5,6,11,12,14,16,18,20,23,24,25,26,29,31],
[2,4,5,9,11,12,13,15,16,19,23,25,26,27,30,32],
[0,1,2,3,5,11,12,16,17,21,22,28,30,31,32],
[0,8,9,12,15,16,17,18,21,24,25].

(c) (33;16,16,15,11;25)
[4,5,7,8,9,10,12,14,16,18,20,22,27,30,31,32],
[3,7,8,9,10,12,13,15,16,19,22,27,28,29,31,32],
[0,1,6,7,8,10,11,14,19,22,23,25,26,27,32],
[0,7,10,12,15,16,17,18,21,23,26].

(d) (33;16,16,15,11;25)
[3,4,5,6,7,9,10,12,14,16,18,20,22,25,31,32],
[1,2,4,5,6,7,8,13,14,16,18,21,22,23,24,30],
[0,1,3,4,9,10,11,14,19,22,23,24,29,30,32],
[0,3,4,7,9,12,21,24,26,29,30].

(e) (33;16,16,15,11;25)
[2,4,6,8,11,12,13,16,18,19,23,24,26,28,30,32],
[2,3,4,5,8,11,16,18,19,20,21,23,24,26,27,32],
[0,1,2,3,7,8,10,11,22,23,25,26,30,31,32],
[0,4,6,9,10,16,17,23,24,27,29].

(f) (33;16,16,15,11;25)
[2,4,5,10,12,16,18,19,20,22,24,25,26,27,30,32],
[2,5,9,11,12,13,14,16,18,23,25,26,27,29,30,32],
[0,1,3,6,7,10,11,12,21,22,23,26,27,30,32],
[0,7,8,12,15,16,17,18,21,25,26].

(g) (33;16,16,13,12;24)
[1,3,5,7,8,10,11,13,14,15,16,21,24,27,29,31],
[1,3,7,9,10,11,15,16,19,20,21,25,27,28,29,31],
[0,3,4,5,7,10,11,22,23,26,28,29,30],
[3,8,9,11,12,13,20,21,22,24,25,30].

(h) (33;16,16,13,12;24)
[2,3,7,9,10,12,14,15,16,20,22,25,27,28,29,32],
[1,7,8,10,12,16,18,19,20,22,24,27,28,29,30,31],
[0,2,3,4,5,11,14,19,22,28,29,30,31],
[5,7,8,11,12,16,17,21,22,25,26,28].

(i) (33;16,16,13,12;24)
[1,3,5,6,8,9,12,14,15,16,20,22,23,26,29,31],
[1,2,10,11,14,16,18,20,21,24,25,26,27,28,29,30],
[0,3,4,8,9,10,12,21,23,24,25,29,30],
[1,2,3,9,12,14,19,21,24,30,31,32].

(j) (33;16,16,13,12;24)
[1,3,5,7,8,11,14,15,16,20,21,23,24,27,29,31],
[1,2,5,8,10,12,13,14,15,16,22,24,26,27,29,30],
[0,4,5,6,12,14,15,18,19,21,27,28,29],
[1,2,3,4,7,14,19,26,29,30,31,32].

(k) (33;16,16,13,12;24)
[2,3,5,7,10,14,15,16,20,21,22,24,25,27,29,32],
[2,4,5,6,11,12,14,15,16,20,23,24,25,26,30,32],
[0,2,4,7,8,9,10,23,24,25,26,29,31],
[5,8,9,12,13,15,18,20,21,24,25,28].

(l) (33;16,16,13,12;24)
[3,4,5,6,8,10,12,13,15,16,19,22,24,26,31,32],
[2,4,5,8,9,10,12,13,14,15,16,22,26,27,30,32],
[0,1,2,3,9,11,15,18,22,24,30,31,32],
[3,6,7,11,12,14,19,21,22,26,27,30].

(m) (33;16,16,13,12;24)
[1,2,4,5,6,8,10,12,13,15,16,19,22,24,26,30],
[1,2,3,4,6,7,8,11,13,14,16,18,21,23,24,28],
[0,4,6,9,10,15,16,17,18,23,24,27,29],
[3,4,12,13,15,16,17,18,20,21,29,30].

(n) (33;16,16,13,12;24)
[1,5,7,8,11,13,15,16,19,21,23,24,27,29,30,31],
[2,3,4,5,7,8,9,11,12,14,16,18,20,23,27,32],
[0,7,8,9,10,13,14,19,20,23,24,25,26],
[3,4,6,8,9,16,17,24,25,27,29,30].

(o) (33;16,16,13,12;24)
[4,5,7,11,13,15,16,19,21,23,24,25,27,30,31,32],
[1,2,3,4,5,6,9,11,12,13,16,18,19,23,25,26],
[0,6,9,11,12,13,16,17,20,21,22,24,27],
[2,6,7,9,11,12,21,22,24,26,27,31].

(p) (33;16,16,13,12;24)
[2,3,6,11,13,15,16,19,21,23,24,25,26,28,29,32],
[1,3,4,5,9,16,18,19,20,21,22,23,25,26,27,31],
[0,3,5,9,12,13,14,19,20,21,24,28,30],
[4,6,9,10,15,16,17,18,23,24,27,29].

(q) (33;16,16,13,12;24)
[3,5,8,9,13,15,16,19,21,22,23,26,27,29,31,32],
[1,3,4,5,7,9,13,14,15,16,21,22,23,25,27,31],
[0,1,2,3,4,11,14,19,22,29,30,31,32],
[3,4,8,11,13,16,17,20,22,25,29,30].

(r) (33;16,16,13,12;24)
[3,4,6,7,9,12,13,14,16,18,22,23,25,28,31,32],
[2,3,4,5,7,9,10,11,12,15,16,19,20,25,27,32],
[0,1,2,4,6,13,16,17,20,27,29,31,32],
[3,4,7,9,15,16,17,18,24,26,29,30].

(s) (33;16,16,13,12;24)
[1,2,3,4,7,9,12,14,16,18,20,22,23,25,27,28],
[2,3,4,5,6,9,10,11,12,15,16,19,20,25,26,32],
[0,1,2,3,9,12,16,17,21,24,30,31,32],
[2,6,10,12,13,15,18,20,21,23,27,31].

(t) (33;16,16,13,12;24)
[2,6,8,10,11,12,13,16,18,19,24,26,28,29,30,32],
[1,3,4,5,7,8,9,10,12,15,16,19,20,22,27,31],
[0,7,8,10,12,13,16,17,20,21,23,25,26],
[4,5,6,8,13,14,19,20,25,27,28,29].
\end{verbatim}

The parameter set for the first 6 equivalence classes (a-f) is
(33;16,16,15,11;25) and for the remaining 14 classes (g-t) is  (33;16,16,13,12;24).
Each of the equivalence classes (a-f) consists of two
small equivalence classes. Consequently, there should be 12
small classes in \cite{GK:JCMCC:2002} for the first parameter set. However, only 9 small classes have been listed in that
paper. Similarly, there are 28 small classes for the second
parameter set, but only 22 are listed in \cite{GK:JCMCC:2002}.

\subsection{ $v=35$ \label{35} }

We have verified the claim made in \cite{GKS:CompStat:2002}
that, for the type (ksss), there are 4 and 2 equivalence classes
for the GS-parameter sets $(35;17,16,16,12;26)$ and
$(35;17,16,14,13;25)$, respectively.

\subsection{ $v=37$ \label{37} }

For (ksss) type, we have verified the claim made in
\cite{GKS:CompStat:2002} that there is only one
equivalence class of solutions for the parameter set
$(37;18,18,16,13;28)$ and only one for
$(37;18,15,15,15;26)$.

For (kkss) type, the family (a) below was constructed in
\cite{Djokovic:OperMatr:2009}
and the families (b-e) in \cite{GK:JCMCC:2002}. We have completed
the classification by constructing the families (f) and (g).

\begin{verbatim}
(kkss) type:

(a) (37;18,18,16,13;28)
[1,2,3,4,6,7,8,11,14,16,17,18,22,24,25,27,28,32],
[1,3,6,8,12,15,19,20,21,23,24,26,27,28,30,32,33,35],
[1,5,8,9,14,16,17,18,19,20,21,23,28,29,32,36],
[0,4,5,6,10,12,13,24,25,27,31,32,33].

(b) (37;18,18,16,13;28)
[1,2,3,4,5,13,18,20,21,22,23,25,26,27,28,29,30,31],
[1,2,4,6,8,9,12,14,15,18,20,21,24,26,27,30,32,34],
[1,5,6,7,8,11,15,16,21,22,26,29,30,31,32,36],
[0,1,4,6,9,13,17,20,24,28,31,33,36].

(c) (37;18,18,16,13;28)
[1,4,5,6,7,8,13,17,19,21,22,23,25,26,27,28,34,35],
[1,3,5,7,8,10,11,14,15,16,19,20,24,25,28,31,33,35],
[2,4,5,6,8,13,16,18,19,21,24,29,31,32,33,35],
[0,7,12,13,14,15,18,19,22,23,24,25,30].

(d) (37;18,18,16,13;28)
[1,4,7,10,11,12,15,17,19,21,23,24,28,29,31,32,34,35],
[1,3,4,7,9,11,12,16,17,19,22,23,24,27,29,31,32,35],
[3,7,8,14,15,16,17,18,19,20,21,22,23,29,30,34],
[0,5,7,9,10,11,18,19,26,27,28,30,32].

(e) (37;18,18,16,13;28)
[1,2,4,5,6,7,11,14,16,17,18,22,24,25,27,28,29,34],
[1,2,3,5,10,13,15,19,20,21,23,25,26,28,29,30,31,33],
[2,3,5,6,8,9,10,14,23,27,28,29,31,32,34,35],
[0,3,4,10,12,16,18,19,21,25,27,33,34].

(f) (37;18,18,16,13;28)
[1,2,3,4,5,6,8,9,10,12,14,17,19,21,22,24,26,30],
[1,2,3,4,6,8,10,11,14,15,18,20,21,24,25,28,30,32],
[3,5,6,11,13,14,17,18,19,20,23,24,26,31,32,34],
[0,3,10,11,12,15,16,21,22,25,26,27,34].

(g) (37;18,18,16,13;28)
[1,2,3,4,6,7,9,10,11,14,16,18,20,22,24,25,29,32],
[1,2,3,4,5,8,9,10,11,13,14,16,18,20,22,25,30,31],
[2,3,4,6,7,15,16,18,19,21,22,30,31,33,34,35],
[0,2,5,8,12,13,18,19,24,25,29,32,35].

\end{verbatim}

\subsection{ $v=39$ \label{39} }

For (ksss) type, we have verified the claim made in
\cite{GKS:CompStat:2002} that there are 3
equivalence classes of solutions for the parameter set
$(39;19,18,17,14;29)$ and two classes for
$(39;19,17,16,15;28)$.

The classification of the GS-difference families for
$v=41,43,45,47,49$ and symmetry types (ksss), (kkss),
(kkks) was not carried out so far. The results of
our computations are given below.

\subsection{ $v=41$ \label{41} }

For (ksss) type and the parameter set $(41;20,20,16,16;31)$
our exhaustive search did not find any solutions.
For (kkss) type, there are 3 equivalence classes with
representatives:

\begin{verbatim}
(kkss) type:

(a) (41;20,20,16,16;31)
[2,4,7,8,10,12,13,14,16,19,20,23,24,26,30,32,35,36,38,40],
[3,4,8,9,10,12,16,19,21,23,24,26,27,28,30,34,35,36,39,40],
[4,5,6,7,9,13,14,20,21,27,28,32,34,35,36,37],
[8,9,11,12,13,15,18,20,21,23,26,28,29,30,32,33].

(b) (41;20,20,16,16;31)
[2,4,6,7,8,10,11,14,15,17,20,22,23,25,28,29,32,36,38,40],
[1,2,4,6,9,10,14,16,17,18,19,20,26,28,29,30,33,34,36,38],
[3,4,5,6,7,10,11,16,25,30,31,34,35,36,37,38],
[2,4,7,8,9,12,14,15,26,27,29,32,33,34,37,39].

(c) (41;20,20,16,16;31)
[2,7,9,11,13,14,16,17,20,22,23,26,29,31,33,35,36,37,38,40],
[1,2,7,9,11,12,15,17,18,19,20,25,27,28,31,33,35,36,37,38],
[5,6,7,9,13,14,17,18,23,24,27,28,32,34,35,36],
[2,3,4,6,7,9,10,18,23,31,32,34,35,37,38,39].

\end{verbatim}

The GS-difference family (b) above is equivalent to the
one found in \cite{GK:JCMCC:2002}. Our exhaustive search
found two more equivalence classes. Note that the
second family listed in \cite{GK:JCMCC:2002} for $n=41$
is not a difference family.

\subsection{ $v=43$ \label{43} }

There are four GS-parameter sets with $v=43$ and
$k_1=21\ge k_2\ge k_3\ge k_4\ge0$. Three of them have
no GS-difference families of type (ksss). Only the
GS-parameter set $(43;21,19,19,16;32)$ has such
families, they form just one equivalence class.

\begin{verbatim}
(ksss) type:

(a) (43;21,19,19,16;32)
[2,3,4,5,6,7,10,12,14,15,16,19,20,21,25,26,30,32,34,35,42],
[0,2,4,5,7,10,11,12,13,17,26,30,31,32,33,36,38,39,41],
[0,3,5,8,9,15,17,18,19,21,22,24,25,26,28,34,35,38,40],
[4,5,6,8,12,13,16,19,24,27,30,31,35,37,38,39].

(kkss) type:

(a) (43;21,21,21,15;35)
[1,3,5,9,11,13,14,16,17,21,23,24,25,28,31,33,35,36,37,39,41],
[4,5,6,9,14,15,16,17,18,19,21,23,30,31,32,33,35,36,40,41,42],
[0,1,3,4,6,7,8,10,14,15,20,23,28,29,33,35,36,37,39,40,42],
[0,1,4,5,6,14,17,20,23,26,29,37,38,39,42].

(b) (43;21,21,18,16;33)
[1,4,5,7,10,12,15,17,19,21,23,25,27,29,30,32,34,35,37,40,41],
[1,2,3,10,13,14,17,19,22,23,25,27,28,31,32,34,35,36,37,38,39],
[2,3,4,7,9,10,12,13,17,26,30,31,33,34,36,39,40,41],
[3,4,8,12,13,14,15,16,27,28,29,30,31,35,39,40].

(c) (43;21,21,18,16;33)
[2,4,6,9,10,15,16,18,21,23,24,26,29,30,31,32,35,36,38,40,42],
[4,7,8,9,11,12,15,17,22,23,24,25,27,29,30,33,37,38,40,41,42],
[6,7,9,13,14,15,17,18,19,24,25,26,28,29,30,34,36,37],
[7,8,9,11,13,16,17,20,23,26,27,30,32,34,35,36].

(d) (43;21,21,18,16;33)
[1,3,8,10,12,15,16,18,19,21,23,26,29,30,32,34,36,37,38,39,41],
[1,2,4,5,6,10,11,12,13,14,16,17,19,20,21,25,28,34,35,36,40],
[2,4,6,7,12,14,15,18,19,24,25,28,29,31,36,37,39,41],
[3,6,7,13,18,19,20,21,22,23,24,25,30,36,37,40].

(kkks) type:

(a) (43;21,21,21,15;35)
[3,5,6,8,10,13,14,15,19,21,23,25,26,27,31,32,34,36,39,41,42],
[3,4,7,11,13,14,17,19,20,22,25,27,28,31,33,34,35,37,38,41,42],
[1,2,3,13,14,16,17,19,20,21,25,28,31,32,33,34,35,36,37,38,39],
[0,6,10,14,15,16,17,19,24,26,27,28,29,33,37].

(b) (43;21,21,21,15;35)
[1,3,4,6,9,10,13,14,19,21,23,25,26,27,28,31,32,35,36,38,41],
[1,3,7,8,10,13,14,15,17,18,19,21,23,27,31,32,34,37,38,39,41],
[2,4,5,6,13,14,15,16,17,20,21,24,25,31,32,33,34,35,36,40,42],
[0,1,2,4,5,10,12,18,25,31,33,38,39,41,42].

(c) (43;21,21,21,15;35)
[1,2,3,5,10,13,16,17,19,21,23,25,28,29,31,32,34,35,36,37,39],
[1,2,4,5,6,10,11,13,14,16,19,21,23,25,26,28,31,34,35,36,40],
[2,4,5,6,7,10,11,12,13,17,21,23,24,25,27,28,29,34,35,40,42],
[0,1,2,3,6,10,16,17,26,27,33,37,40,41,42].

(d) (43;21,21,21,15;35)
[2,4,5,6,8,9,11,12,13,17,19,21,23,25,27,28,29,33,36,40,42],
[2,5,6,7,8,11,13,15,16,17,18,20,21,24,29,31,33,34,39,40,42],
[4,5,6,9,10,11,13,14,16,17,18,19,20,21,28,31,35,36,40,41,42],
[0,3,10,11,13,14,15,20,23,28,29,30,32,33,40].

(e) (43;21,21,21,15;35)
[1,3,6,8,11,12,15,17,21,23,24,25,27,29,30,33,34,36,38,39,41],
[6,8,9,10,11,13,16,17,20,21,24,25,28,29,31,36,38,39,40,41,42],
[2,3,8,9,10,11,12,13,15,16,17,18,19,20,22,29,36,37,38,39,42],
[0,2,4,8,9,14,15,20,23,28,29,34,35,39,41].

\end{verbatim}

\subsection{ $v=45$ \label{45} }

\begin{verbatim}
(ksss) type:

(a) (45;22,22,21,16;36)
[1,2,3,6,7,10,11,13,15,22,24,25,26,27,28,29,31,33,36,37,40,41],
[1,3,4,7,9,10,13,15,20,21,22,23,24,25,30,32,35,36,38,41,42,44],
[0,2,3,4,5,9,12,13,15,17,22,23,28,30,32,33,36,40,41,42,43],
[1,6,14,15,17,18,21,22,23,24,27,28,30,31,39,44].

(b) (45;22,22,21,16;36)
[1,2,4,8,9,10,12,14,16,18,19,21,22,25,28,30,32,34,38,39,40,42],
[1,2,4,5,7,10,14,15,17,21,22,23,24,28,30,31,35,38,40,41,43,44],
[0,1,4,5,6,9,10,11,12,14,22,23,31,33,34,35,36,39,40,41,44],
[2,5,9,10,18,20,21,22,23,24,25,27,35,36,40,43].

(c) (45;22,21,19,17;34)
[1,3,6,7,9,12,16,18,21,22,25,26,28,30,31,32,34,35,37,40,41,43],
[0,1,2,4,8,9,10,11,12,15,19,26,30,33,34,35,36,37,41,43,44],
[0,2,3,5,10,13,14,18,19,20,25,26,27,31,32,35,40,42,43],
[0,3,4,5,7,9,17,21,22,23,24,28,36,38,40,41,42].

(d) (45;22,19,19,18;33)
[1,3,6,8,10,12,18,21,22,25,26,28,29,30,31,32,34,36,38,40,41,43],
[0,1,7,8,11,13,18,19,21,22,23,24,26,27,32,34,37,38,44],
[0,1,2,4,5,12,13,17,19,22,23,26,28,32,33,40,41,43,44],
[3,4,5,6,8,11,12,17,21,24,28,33,34,37,39,40,41,42].

(kkss) type:

(a) (45;22,22,21,16;39)
[1,2,5,7,8,12,13,15,17,19,21,22,25,27,29,31,34,35,36,39,41,42],
[2,3,4,5,6,9,11,12,13,17,18,20,21,22,26,29,30,31,35,37,38,44],
[0,1,2,3,4,6,10,11,13,19,21,24,26,32,34,35,39,41,42,43,44],
[3,4,7,8,17,19,20,22,23,25,26,28,37,38,41,42].

\end{verbatim}

\subsection{ $v=47$ \label{47} }

In this case there are two GS-parameter sets with $k_1=(v-1)/2$,
namely $(47;23,22,22,17;37)$ and $(47;23,21,19,19;35)$.
Our exhaustive search did not find any families of type (ksss).

\subsection{ $v=49$ \label{49} }

In this case there are again two GS-parameter sets with
$k_1=(v-1)/2$, namely $(49;24,24,22,18;39)$ and $(49;24,22,21,19;37)$.
Our exhaustive search did not find any families of type (ksss)
or (kkss).

\end{document}